\newtheorem{theorem}{Theorem}
\newtheorem{definition}{Definition}
\newtheorem{lemma}{Lemma}
\newtheorem{proposition}{Proposition}
\newtheorem{corollary}{Corollary}
\newtheorem{remark}{Remark}
\date{}
\numberwithin{equation}{section}
\numberwithin{theorem}{section}
\numberwithin{lemma}{section}
\numberwithin{corollary}{section}
\numberwithin{remark}{section} 
\numberwithin{proposition}{section}
\numberwithin{definition}{section}
\newcommand{\Div}{\operatorname{div}}
\newcommand{\R}{\mathbb{R}}
\newcommand{\supp}{\operatorname{supp } }
\newcommand{\diam}{\operatorname{diam}}
\newcommand{\loc}{\operatorname{loc}}
\def\XXint#1#2#3{{\setbox0=\hbox{$#1{#2#3}{\int}$ }
		\vcenter{\hbox{$#2#3$ }}\kern-.6\wd0}}
\begin{document}

\subjclass[2020]{35B65, 35J70, 35J92}

\keywords{Regularity, Degenerate elliptic PDEs, Morrey spaces.}


\title[Higher regularity for degenerate PDEs in Morrey spaces]{Higher H\"older regularity for degenerate elliptic PDEs with data in Morrey spaces}

\author[Giuseppe Di Fazio]{Giuseppe Di Fazio}
\address{Dipartimento di Matematica e Informatica, University of Catania, Italy}{}
\email{giuseppe.difazio@unict.it}

\author[R. Teymurazyan]{Rafayel Teymurazyan}
\address{Applied Mathematics and Computational Sciences (AMCS), Compu\-ter, Electrical and Mathematical Sciences and Engineering Division (CEMSE), King Abdullah University of Science and Technology (KAUST), Thuwal, 23955 -6900, Kingdom of Saudi Arabia}{}
\email{rafayel.teymurazyan@kaust.edu.sa} 

\author[J.M.~Urbano]{Jos\'{e} Miguel Urbano}
\address{Applied Mathematics and Computational Sciences (AMCS), Compu\-ter, Electrical and Mathematical Sciences and Engineering Division (CEMSE), King Abdullah University of Science and Technology (KAUST), Thuwal, 23955 -6900, Kingdom of Saudi Arabia and CMUC, Department of Mathematics, University of Coimbra, 3000-143 Coimbra, Portugal}{} 
\email{miguel.urbano@kaust.edu.sa} 

\begin{abstract}
We establish sharp local $C^{1,\alpha}-$regularity for weak solutions to degenerate elliptic equations of $p-$Laplacian type with data in Morrey spaces. The proof relies on the Fefferman--Phong inequality and standard tools from regularity theory for nonlinear PDEs.
\end{abstract}

\date{\today}
	
\maketitle
	
\section{Introduction}

This note addresses the issue of higher regularity for degenerate elliptic PDEs of the form
\begin{equation}\label{general equation}
    -\Div A(x,u, Du)=B(x,u,Du)
\end{equation}
in a bounded domain $\Omega\subset\R^n$. The functions $A$ and $B$ satisfy suitable structural assumptions modeled on the $p-$Poisson equation
\begin{equation}\label{mainequation}
	-\Delta_p u = f,
\end{equation}
where $\Delta_p u:=\operatorname{div}(|Du|^{p-2} Du)$ is the $p-$Laplacian operator. The novelty is that $f$ enjoys a modest degree of integrability, namely it is merely an $L^1-$function satisfying a decay condition on its integrals over balls, \textit{i.e.}, it belongs to a suitable Morrey space. More precisely, we will assume 
\begin{equation}\label{f is in Morrey}
    f\in L^{1,\lambda}(\Omega), \qquad n-1<\lambda<n.
\end{equation}
The choice of a Morrey space is motivated by the fact that, in some special cases, this condition is also necessary for regularity (see \cite{DFnec} and \cite{Rako}).

The study of regularity for equations of this type has a rich and long history. After the pioneering De Giorgi--Nash--Moser theory, the work of Ladyzhenskaya and Uraltseva (see \cite{LU68}, and also \cite{E82}) established the local $C^{1,\alpha}-$regularity of weak solutions for the homogeneous equation, and was later extended to equations with a quite general right-hand side satisfying suitable integrability assumptions (see, for example, \cite{DiB83, T84}). The study of regularity under less restrictive integrability assumptions was pioneered by Morrey in \cite{M66} and advanced through contributions from various authors (see \cite{DiF92}, \cite{M24} and references therein). Under assumption \eqref{f is in Morrey}, weak solutions of \eqref{mainequation} are known to be locally of class $C^{0,\alpha}$, for some $\alpha\in(0,1)$ (see \cite[Theorem 2.5]{DiF92}, \cite[Theorem 3]{Z94}, \cite{DFZ2022}, \cite{DFZ-DiBe} and \cite{DFnec}). Moreover, they are differentiable under several different assumptions on $f$ (see, \textit{e.g.}, \cite{M24} and \cite{BH10, DM11}). 

Our main result establishes higher H\"older regularity, namely the $C^{1,\alpha}$ local regularity, for
\begin{equation*}
\alpha=
\begin{cases}
\min\left(\gamma,\lambda + 1 -\frac{2n}{p}\right),\quad \frac{2n}{\lambda+1}<p \leq 2;\\
\\
\min\left(\gamma,\dfrac{\lambda +1 -n}{p-1}\right),\quad 2\le p<n.
\end{cases}
\end{equation*}
Here, $\gamma\in(0,1)$ is the H\"older regularity exponent for the gradient of the solution to the homogeneous equation having $u$ as boundary data. Such a result is known in the linear case $p=2$. It is shown in \cite[Theorem 2.8]{DiF92} that if $u$ is a solution to $Lu=f$, where $L$ is a linear uniformly elliptic operator with smooth coefficients, then $u\in C^{1,\alpha}_{\loc}(\Omega)$ for some $\alpha\in(0,1)$, provided \eqref{f is in Morrey} holds. Still, the proof heavily relies on properties of the Green function that are not available for the nonlinear degenerate case. 

Our analysis relies heavily on the powerful Fefferman--Phong inequality (see \Cref{FP inequality}), a weighted Poincar\'e-type inequality instrumental in estimating the term containing the right-hand side. This inequality dates back to the 1980s \cite{F83, FP81}, and it was used to understand the distribution of eigenvalues of self-adjoint operators. A simplified proof of the Fefferman--Phong inequality from \cite{F83} was given in \cite{CF90} under an $L^r$ assumption on the right-hand side. For other variants of the Fefferman--Phong inequality, see \cite{GiuPie}, treating the case of general metric spaces with a doubling measure, and \cite{K24}, containing the extension to the fractional setting. 

The note is structured as follows. \Cref{review_section} overviews the main properties of the function spaces used in our analysis. To ensure clarity, in \Cref{results_section}, we write a detailed proof of the main result for the model equation \eqref{mainequation}. In \Cref{generalization}, we sketch the proof for the general case of \eqref{general equation}. 

\section{Morrey spaces and Stummel classes} \label{review_section}

This section provides a brief overview of Morrey spaces and Stummel classes. We refer the reader to \cite{PKJF13, SDH1, SDH2} for a detailed and comprehensive treatment. Throughout the note, we assume that $\Omega\subset\R^n$ is a bounded domain with a Lipschitz boundary. 

\begin{definition}
Let $1\leq p<\infty$ and $\lambda \geq 0$. The Morrey space $L^{p,\lambda}(\Omega)$ is the set of functions $f\in L^p(\Omega)$ for which 
\begin{equation} \label{Morrey_norm}
\|f\|_{L^{p,\lambda}(\Omega)}:=\sup_{\substack{x\in\Omega\\0<r<\diam(\Omega)}} \left(\frac{1}{r^\lambda}\int_{\Omega\cap B_r(x)}|f(y)|^p\,dy\right)^{1/p}<\infty.
\end{equation}
The space $L^{p,\lambda}(\Omega)$ is a Banach space (a Hilbert space if $p=2$) under the norm \eqref{Morrey_norm}.
\end{definition}

\begin{remark}
For any $1 \leq p<\infty$, $L^{p,0}(\Omega)=L^p(\Omega)$ and $L^{p,n}(\Omega)=L^\infty(\Omega)$. 
Furthermore, if $\lambda>n$, then $L^{p,\lambda}(\Omega)=\{0\}$ (see \cite[Theorem 5.5.1]{PKJF13}).
\end{remark}

Morrey spaces enjoy several embedding properties; for the following result, see \cite[Theorem 5.5.1]{PKJF13}.
\begin{proposition}\label{inc}
If $1\le p\le q<\infty$ and $\lambda,\mu\in[0,n]$ are such that 
$$
\dfrac{n-\mu}{q} \leq \dfrac{n-\lambda}{p},
$$
then
$$
L^{q,\mu} (\Omega) \subseteq  L^{p,\lambda} (\Omega).
$$
Moreover, there exists a constant $c>0$ such that
$$
\|f\|_{L^{p,\lambda}(\Omega)} \leq  c \, \|f\|_{L^{q,\mu}(\Omega)}.
$$
\end{proposition}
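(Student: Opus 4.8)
The plan is to reduce both the inclusion and the norm bound to a single local estimate on each ball $\Omega \cap B_r(x)$, controlled by Hölder's inequality together with the boundedness of $\Omega$. Fix $f \in L^{q,\mu}(\Omega)$, a point $x \in \Omega$, and a radius $0 < r < \diam(\Omega)$. Since $p \le q$, I would apply Hölder's inequality on $\Omega \cap B_r(x)$ with exponents $q/p$ and $q/(q-p)$ to split $\int |f|^p = \int |f|^p \cdot 1$, obtaining
$$
\int_{\Omega \cap B_r(x)} |f|^p \dd y \le \left(\int_{\Omega \cap B_r(x)} |f|^q \dd y\right)^{p/q} |\Omega \cap B_r(x)|^{(q-p)/q}.
$$
The second factor is bounded by $|B_r(x)|^{(q-p)/q} = (\omega_n r^n)^{(q-p)/q}$, where $\omega_n$ denotes the volume of the unit ball, while the definition of the Morrey norm controls the first factor by $\left(r^\mu \|f\|_{L^{q,\mu}(\Omega)}^q\right)^{p/q}$.

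Combining these two bounds and dividing by $r^\lambda$, I would collect the powers of $r$ into a single exponent:
$$
\frac{1}{r^\lambda}\int_{\Omega \cap B_r(x)} |f|^p \dd y \le \omega_n^{(q-p)/q}\, \|f\|_{L^{q,\mu}(\Omega)}^p \; r^{\,\theta}, \qquad \theta := \frac{nq - p(n-\mu)}{q} - \lambda.
$$
The crux of the argument is then to verify that the structural hypothesis is precisely what forces $\theta \ge 0$: multiplying $\frac{n-\mu}{q} \le \frac{n-\lambda}{p}$ through by $pq$ gives $p(n-\mu) \le q(n-\lambda)$, which rearranges exactly to $\frac{nq - p(n-\mu)}{q} \ge \lambda$, that is, $\theta \ge 0$.

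With $\theta \ge 0$ and $r < \diam(\Omega)$, I would bound $r^\theta \le \diam(\Omega)^\theta$, so that the right-hand side becomes independent of $x$ and $r$. Taking the supremum over all admissible $x$ and $r$ and extracting the $p$-th root yields
$$
\|f\|_{L^{p,\lambda}(\Omega)} \le \omega_n^{(q-p)/(pq)}\, \diam(\Omega)^{\theta/p}\, \|f\|_{L^{q,\mu}(\Omega)},
$$
which gives both the finiteness of the left-hand norm (hence the inclusion) and the explicit constant $c$. I do not anticipate a genuine obstacle, as the argument is a direct Hölder estimate; the only point requiring care is the bookkeeping of the $r$-exponent and the observation that the boundedness of $\Omega$ (finiteness of $\diam(\Omega)$) is essential — it is what converts the nonnegative power $r^\theta$ into a uniform constant, and the inclusion would otherwise fail on unbounded domains unless $\theta = 0$.
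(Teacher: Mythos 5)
Your proof is correct. The paper does not prove this proposition itself --- it simply cites \cite[Theorem 5.5.1]{PKJF13} --- and your argument (H\"older's inequality with exponents $q/p$ and $q/(q-p)$ on $\Omega\cap B_r(x)$, followed by the observation that the hypothesis $\frac{n-\mu}{q}\le\frac{n-\lambda}{p}$ is exactly the condition $\theta\ge 0$ on the residual power of $r$, which is then absorbed using $r<\diam(\Omega)$) is precisely the standard proof given in that reference; the exponent bookkeeping and the constant $c=\omega_n^{(q-p)/(pq)}\diam(\Omega)^{\theta/p}$ all check out, and your closing remark that boundedness of $\Omega$ is what permits the inequality (rather than equality) in the hypothesis is also accurate.
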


Next, we recall the definition of Stummel-Kato classes (see, for example, \cite{as, K73, S56}), which are strictly related to Morrey spaces. 

\begin{definition}
Let $f\in L^1(\Omega)$, and $1\leq p < n$. Set 
\begin{equation} \label{stummel-kato-modulus}
\eta(r)=\eta_p(f,r):=\sup_{x\in\Omega} \int_{\Omega\cap B_r(x)}\dfrac{|f(y)|}{|x-y|^{n-p}}\,dy\,.
\end{equation}
We say that $f$ belongs to the Stummel-Kato class $S_p(\Omega)$ if the Stummel-Kato modulus \eqref{stummel-kato-modulus} is finite and, moreover,
$$
\lim_{r\downarrow 0}\eta(r)=0.
$$
In case $\eta(r)$ is just bounded around zero, we say that $f\in\Tilde{S}_p(\Omega)$. 
\end{definition}

For the proof of the following result, we refer to \cite{SDH1} or \cite{DiF92}.

\begin{lemma}\label{inclusions}
If $1\le p<n$ and $n-p<\lambda<n$, then 
$$
L^{1,\lambda}(\Omega)
\subset  S_p(\Omega) \subset\Tilde{S}_p(\Omega)\subset L^{1,n-p}(\Omega).
$$
Moreover
$$
\eta(r)\le c\,r^{\lambda-n+p}\|f\|_{L^{1,\lambda}(\Omega)},
$$
for a constant $c>0$ independent of $f$ and $r$.
\end{lemma}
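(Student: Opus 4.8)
The plan is to establish the three inclusions in turn, the first one carrying the quantitative estimate and the remaining two being comparatively soft. I would begin with $L^{1,\lambda}(\Omega)\subset S_p(\Omega)$, which is the heart of the matter. Fix $f\in L^{1,\lambda}(\Omega)$, a point $x\in\Omega$, and a radius $r\in(0,\diam(\Omega))$. The idea is to decompose $B_r(x)$ into dyadic annuli
$$
A_j:=\{y:\,2^{-(j+1)}r\le |x-y|<2^{-j}r\},\qquad j\ge 0,
$$
on which the singular kernel $|x-y|^{-(n-p)}$ is comparable to $(2^{-j}r)^{-(n-p)}$. On each annulus I would bound $|x-y|^{-(n-p)}\le (2^{-(j+1)}r)^{-(n-p)}$ and control the mass of $f$ through the Morrey condition: since $A_j\subset B_{2^{-j}r}(x)$, one has $\int_{\Omega\cap B_{2^{-j}r}(x)}|f|\le (2^{-j}r)^{\lambda}\|f\|_{L^{1,\lambda}(\Omega)}$.

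Multiplying and summing over $j$ produces a geometric series of ratio $2^{\,n-p-\lambda}$; this is exactly where the hypothesis $\lambda>n-p$ is essential, as it guarantees $2^{\,n-p-\lambda}<1$ and hence convergence. Collecting the powers of $r$ yields precisely
$$
\eta(r)\le c\,r^{\lambda-n+p}\,\|f\|_{L^{1,\lambda}(\Omega)},
$$
with $c=c(n,p,\lambda)$ independent of $f$ and $r$, which is the claimed bound. Since $\lambda-n+p>0$, the right-hand side tends to $0$ as $r\downarrow 0$, so $f\in S_p(\Omega)$. The middle inclusion $S_p(\Omega)\subset\Tilde{S}_p(\Omega)$ is then immediate, as a modulus vanishing at the origin is in particular bounded near it.

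For the last inclusion $\Tilde{S}_p(\Omega)\subset L^{1,n-p}(\Omega)$ I would argue in the opposite direction, using the kernel as a lower rather than an upper bound. For $y\in B_\rho(x)$ one has $|x-y|<\rho$, and since $n-p>0$ this gives $|x-y|^{-(n-p)}\ge \rho^{-(n-p)}$; hence
$$
\frac{1}{\rho^{n-p}}\int_{\Omega\cap B_\rho(x)}|f|\le \int_{\Omega\cap B_\rho(x)}\frac{|f(y)|}{|x-y|^{n-p}}\dd y\le \eta(\rho).
$$
If $f\in\Tilde{S}_p(\Omega)$, then $\eta$ is bounded, say by $M$, for $\rho\le r_0$, which controls the Morrey quotient at small scales. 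For the remaining radii $r_0\le \rho<\diam(\Omega)$ I would simply estimate $\int_{\Omega\cap B_\rho(x)}|f|\le \|f\|_{L^1(\Omega)}$ together with $\rho^{-(n-p)}\le r_0^{-(n-p)}$, so the supremum over all scales is finite and $f\in L^{1,n-p}(\Omega)$.

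I would identify the dyadic summation as the only genuinely delicate point: its convergence hinges precisely on the lower bound $\lambda>n-p$, and the bookkeeping of the exponents of $2$ and of $r$ must be carried out carefully to land on the stated power $r^{\lambda-n+p}$. The two remaining inclusions are essentially definitional or follow from a one-line comparison of the kernel with a constant.
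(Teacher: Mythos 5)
Your proof is correct, and the exponent bookkeeping checks out: the dyadic sum has ratio $2^{\,n-p-\lambda}<1$ precisely because $\lambda>n-p$, yielding $\eta(r)\le c\,r^{\lambda-n+p}\|f\|_{L^{1,\lambda}(\Omega)}$ with $c=c(n,p,\lambda)$, and the remaining two inclusions are handled as you say. The paper does not prove this lemma but defers to \cite{SDH1} and \cite{DiF92}, where the argument is exactly this dyadic-annuli decomposition, so your proposal matches the standard proof.
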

We also recall the Fefferman--Phong inequality, a weighted Poincar\'e type inequality fundamental in our analysis (see, \textit{e.g.}, \cite[Corollary 3.4]{GiuPie} and \cite{DFZ-SUM-FP}). 

\begin{theorem}\label{FP inequality}
If $1\le p<n$ and $f\in\Tilde{S}_p(\Omega)$, then there exists a constant $c>0$, depending only on $p$ and $n$, such that, for every ball $B\subset\Omega$ with radius $r>0$, one has
$$
\int_{B}|f||\varphi|^p\,dx 
\le 
c\,\eta(2r)\int_{B}|D\varphi|^p \,dx,
$$
for all $\varphi\in W^{1,p}_0(B)$.
\end{theorem}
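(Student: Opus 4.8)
The plan is to reduce the inequality to a trace estimate for the Riesz potential of $D\varphi$ and then to let the Stummel modulus control the relevant local masses. By the density of $C_c^\infty(B)$ in $W^{1,p}_0(B)$, together with Fatou's lemma along an a.e.-convergent subsequence, it suffices to prove the estimate for $\varphi\in C_c^\infty(B)$ with a constant independent of $\varphi$; the gradient term on the right passes to the limit since $D\varphi_j\to D\varphi$ in $L^p$. First I would record the pointwise Sobolev representation
\[
|\varphi(x)|\le c(n)\int_B\frac{|D\varphi(y)|}{|x-y|^{n-1}}\dd y,
\]
obtained from the fundamental theorem of calculus along rays issuing from $x$, followed by integration over the unit sphere. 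Writing $g:=|D\varphi|\ge 0$, this reduces the claim to the weighted (trace) inequality
\[
\int_B|f(x)|\left(\int_B\frac{g(y)}{|x-y|^{n-1}}\dd y\right)^{p}\dd x\le c\,\eta(2r)\int_B g^{p}\dd x.
\]

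The case $p=1$ is immediate: by Fubini's theorem and the inclusion $B\subset B_{2r}(y)$ valid for every $y\in B$, the left-hand side equals $\int_B g(y)\bigl(\int_B|f(x)||x-y|^{-(n-1)}\dd x\bigr)\dd y\le\eta(2r)\int_B g$, since here $n-p=n-1$ and the inner integral is precisely a Stummel-type integral centered at $y$. For $p>1$ the same Fubini step is unavailable because of the outer $p$-th power, and this is where the genuine difficulty lies. A single application of Hölder's inequality, splitting the kernel $|x-y|^{-(n-1)}$ so as to pair $g$ with the Stummel kernel $|x-y|^{-(n-p)}$, forces the complementary integral $\int_B|x-y|^{-n}\dd y$ to appear, and this diverges logarithmically: the Riesz exponent $n-1$ sits exactly at the borderline relative to the Stummel exponent $n-p$. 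Thus one cannot land on $\eta(2r)$ by a crude estimate, and the borderline must be resolved by a finer decomposition.

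To overcome this I would invoke a Hedberg--Adams type argument for the trace inequality. Splitting the potential at a free scale $\delta$ into a near part, controlled by $\delta$ times the Hardy--Littlewood maximal function of $g$, and a far part, controlled through Hölder's inequality by $\|g\|_{L^p}$ using precisely the hypothesis $p<n$ (which renders the dual integral $\int_{|x-y|\ge\delta}|x-y|^{-(n-1)p'}\dd y$ convergent), reduces matters to a superposition of local estimates. The weight enters only through local masses, which the Stummel modulus controls: since $|x-y|^{-(n-p)}\ge\rho^{-(n-p)}$ on $B_\rho(y)$, one has $\int_{B_\rho(y)}|f|\le\rho^{\,n-p}\eta(\rho)\le\rho^{\,n-p}\eta(2r)$ for all $\rho\le 2r$, by the monotonicity of $\eta$ and the fact that $f\in\Tilde{S}_p$. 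Assembling the scales against the boundedness of the maximal function is then designed to produce the constant $c\,\eta(2r)$. The main obstacle is exactly this borderline trace estimate at the critical exponent, which is the very content of the Fefferman--Phong inequality; for the rigorous technical core I would rely on the metric-measure formulation \cite[Corollary 3.4]{GiuPie}, specialized to $\R^n$ with Lebesgue measure, or on \cite{DFZ-SUM-FP}.
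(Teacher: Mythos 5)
The paper does not actually prove \Cref{FP inequality}: it is quoted from the literature, with \cite[Corollary 3.4]{GiuPie} and \cite{DFZ-SUM-FP} indicated as sources. So there is no internal proof to compare against, and your argument has to stand on its own. Its first half does: the reduction by density to $\varphi\in C_c^\infty(B)$, the pointwise representation $|\varphi(x)|\le c(n)\int_B|D\varphi(y)|\,|x-y|^{1-n}\,dy$, the resulting two-weight trace inequality for the Riesz potential, the Fubini argument for $p=1$, and the observation that a single H\"older split produces the non-integrable kernel $|x-y|^{-n}$ are all correct and are precisely the standard opening moves in the cited references. You have also correctly located where the difficulty sits.

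The gap is in the proposed resolution of the borderline. Splitting at a scale $\delta$ gives $I_1g(x)\le C\bigl(\delta\,Mg(x)+\delta^{-(n-p)/p}\|g\|_{L^p(B)}\bigr)$; the far part is handled by $\int_{B_\rho(y)}|f|\le\rho^{n-p}\eta(2r)$ exactly as you say (with $\delta\sim r$), but the near part then requires the two-weight maximal inequality $\int_B|f|(Mg)^p\,dx\le C\,r^{-p}\eta(2r)\int_B g^p\,dx$, and this is \emph{false} for $f$ merely in $\tilde{S}_p(\Omega)$. Take $f=\ell^{-p}\chi_{B_\ell(x_0)}$ and $g=\chi_{B_\ell(x_0)}$ with $B_\ell(x_0)\subset B$ and $\ell\ll r$: then $\eta_p(f,\cdot)\simeq 1$ uniformly in $\ell$, while $\int_B|f|(Mg)^p\gtrsim\ell^{n-p}$ and $r^{-p}\eta(2r)\int_Bg^p\simeq r^{-p}\ell^{n}$, so the required bound fails by a factor $(r/\ell)^p$. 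The same obstruction appears if one decomposes dyadically: each single scale $t$ satisfies the uniform bound $\|t\fint_{B_t(\cdot)}g\|_{L^p(|f|dx)}\le\eta(2r)^{1/p}\|g\|_{L^p}$, but the sum over the $\log(r/\delta)$ scales diverges --- that divergence \emph{is} the borderline, and the maximal function (which carries no gain per scale) cannot resolve it. The correct mechanism is different: for $p=2$ one applies the Schur test to $T=|f|^{1/2}I_1$, whose $TT^*$ kernel is comparable to $|f(x)|^{1/2}|f(y)|^{1/2}|x-y|^{2-n}$, the test weight $|f|^{1/2}$ yielding exactly $\|TT^*\|\le c\,\eta(2r)$; for general $p$ one uses a weighted $L^p$ Schur-test or the sum-operator machinery of \cite{DFZ-SUM-FP}. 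Your closing appeal to \cite[Corollary 3.4]{GiuPie} is therefore where the actual proof lives; the Hedberg paragraph should be dropped or presented only as motivation, since as a proof step it does not close.
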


As a consequence of \Cref{FP inequality} and \Cref{inclusions}, we obtain the following corollary, which plays a crucial role in proving our main result.

\begin{corollary}
If $1\le p< n$, $n-p<\lambda<n$, and $f\in L^{1,\lambda}(\Omega)$, then there exists a constant $c>0$, depending only on $p$, $\lambda$ and $n$, such that for every ball $B\subset\Omega$ with radius $r>0$, one has
\begin{equation}\label{constantestimate}
\int_{B}|f||\varphi|^p\,dx 
\le 
c \, r^{\lambda-n+p}\|f\|_{L^{1,\lambda}(\Omega)} \int_{B}|D\varphi|^p\,dx,
\end{equation}
for all $\varphi\in W^{1,p}_0(B)$.
\end{corollary}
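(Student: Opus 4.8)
The plan is to obtain \eqref{constantestimate} by simply chaining the two earlier results: the Fefferman--Phong inequality (\Cref{FP inequality}) supplies the weighted Poincar\'e estimate with the Stummel-Kato modulus $\eta(2r)$ as the constant, while \Cref{inclusions} both guarantees that the hypotheses of \Cref{FP inequality} are met and converts $\eta(2r)$ into the explicit power of $r$ appearing on the right-hand side. So there is no genuine obstacle here; the work is entirely bookkeeping of hypotheses and constants.

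First I would verify admissibility. Since $1\le p<n$, $n-p<\lambda<n$, and $f\in L^{1,\lambda}(\Omega)$, the chain of inclusions in \Cref{inclusions} gives $f\in L^{1,\lambda}(\Omega)\subset S_p(\Omega)\subset\Tilde{S}_p(\Omega)$. In particular $f\in\Tilde{S}_p(\Omega)$, which is precisely the hypothesis required to invoke \Cref{FP inequality}.

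Next I would apply \Cref{FP inequality} to the given ball $B$ of radius $r$: for every $\varphi\in W^{1,p}_0(B)$,
$$
\int_{B}|f||\varphi|^p\,dx
\le
c_1\,\eta(2r)\int_{B}|D\varphi|^p\,dx,
$$
where $c_1=c_1(p,n)$. I would then feed in the quantitative bound on the Stummel-Kato modulus from \Cref{inclusions}, namely $\eta(\rho)\le c_2\,\rho^{\lambda-n+p}\|f\|_{L^{1,\lambda}(\Omega)}$ with $c_2$ independent of $f$ and $\rho$, evaluated at $\rho=2r$. This yields
$$
\eta(2r)\le c_2\,(2r)^{\lambda-n+p}\|f\|_{L^{1,\lambda}(\Omega)}
=c_2\,2^{\lambda-n+p}\,r^{\lambda-n+p}\|f\|_{L^{1,\lambda}(\Omega)}.
$$

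Finally I would combine the two displays and absorb all numerical factors into a single constant $c:=c_1\,c_2\,2^{\lambda-n+p}$, which depends only on $p$, $\lambda$, and $n$ (the exponent $\lambda-n+p$ is positive under the standing assumption $\lambda>n-p$, so the power of $2$ is harmless). This produces exactly
$$
\int_{B}|f||\varphi|^p\,dx
\le
c\,r^{\lambda-n+p}\|f\|_{L^{1,\lambda}(\Omega)}\int_{B}|D\varphi|^p\,dx
$$
for all $\varphi\in W^{1,p}_0(B)$, which is \eqref{constantestimate}. The only point warranting a word of care is the dependence of the constant: one must check that neither $c_1$ (from \Cref{FP inequality}, depending only on $p,n$) nor $c_2$ (from \Cref{inclusions}, independent of $f$ and $r$) carries a hidden dependence on the ball or on $f$, so that the resulting $c$ legitimately depends only on $p$, $\lambda$, and $n$.
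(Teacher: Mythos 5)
Your proposal is correct and follows exactly the route the paper intends: the corollary is stated as an immediate consequence of \Cref{FP inequality} and \Cref{inclusions}, and your argument is precisely that chaining, with the hypothesis check $f\in L^{1,\lambda}(\Omega)\subset\Tilde{S}_p(\Omega)$ and the substitution $\eta(2r)\le c\,(2r)^{\lambda-n+p}\|f\|_{L^{1,\lambda}(\Omega)}$ carried out carefully. The bookkeeping of the constant's dependence on $p$, $\lambda$, $n$ is also handled correctly.
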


We close this section by recalling the celebrated Campanato-Meyers characterization of H\"older spaces (see \cite{PKJF13}). We denote with $(u)_r$ the integral average of $u$ over a ball with radius $r$.

\begin{theorem}\label{campanato}
Let $1\le p<\infty$ and $0<\gamma\le1$.  
If there exists $c>0$ such that
$$\fint_{B}|u-(u)_r|^p\,dx \le c\, r^{\gamma p},$$
for every ball $B\subset\Omega$ with radius $r>0$, then $u\in C^{0,\gamma}_{\loc}(\Omega)$. 
\end{theorem}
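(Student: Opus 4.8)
The plan is to exhibit a continuous representative of $u$ by showing that the spherical averages $(u)_{B_r(x)}$ converge as $r\downarrow 0$, and then to establish the H\"older bound directly for this representative. The starting observation is that averages over concentric balls are close: if $\rho<r$ and $B_r(x)\subset\Omega$, then, since $(u)_{B_r(x)}$ is a constant,
$$
\left|(u)_{B_\rho(x)}-(u)_{B_r(x)}\right|^p
\le \fint_{B_\rho(x)}\left|u-(u)_{B_r(x)}\right|^p\dd x
\le \frac{|B_r|}{|B_\rho|}\fint_{B_r(x)}\left|u-(u)_{B_r(x)}\right|^p\dd x.
$$
Taking $\rho=r/2$ and invoking the hypothesis bounds the right-hand side by $c\,2^n r^{\gamma p}$, so that, after taking $p$-th roots, $|(u)_{B_{r/2}(x)}-(u)_{B_r(x)}|\le C\,r^{\gamma}$.

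Next, I would telescope along the dyadic radii $r_k=2^{-k}r$. The previous estimate gives $\sum_k|(u)_{B_{r_{k+1}}(x)}-(u)_{B_{r_k}(x)}|\le C\,r^{\gamma}\sum_k 2^{-k\gamma}<\infty$, which converges because $\gamma>0$; hence $\{(u)_{B_{r_k}(x)}\}$ is Cauchy and converges to a limit $\tilde u(x)$. Summing the geometric series simultaneously yields $|(u)_{B_r(x)}-\tilde u(x)|\le C\,r^{\gamma}$. By the Lebesgue differentiation theorem $\tilde u=u$ almost everywhere, so $\tilde u$ is the sought representative.

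Finally, I would compare averages over balls of the same radius but different centers. Fix $x,y$ in a compact subset $K$ of $\Omega$, set $d=|x-y|$ and $r=2d$, small enough that both balls lie in $\Omega$. The intersection $B_r(x)\cap B_r(y)$ has Lebesgue measure bounded below by a fixed fraction of $|B_r|$, whence
$$
\left|(u)_{B_r(x)}-(u)_{B_r(y)}\right|^p
\le C\left(\fint_{B_r(x)}\left|u-(u)_{B_r(x)}\right|^p\dd z+\fint_{B_r(y)}\left|u-(u)_{B_r(y)}\right|^p\dd z\right)\le C\,r^{\gamma p}.
$$
Combining the three estimates with $r=2|x-y|$ through the triangle inequality,
$$
|\tilde u(x)-\tilde u(y)|\le |\tilde u(x)-(u)_{B_r(x)}|+|(u)_{B_r(x)}-(u)_{B_r(y)}|+|(u)_{B_r(y)}-\tilde u(y)|\le C\,|x-y|^{\gamma},
$$
which is precisely the asserted local H\"older continuity.

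The main obstacle is not any single estimate but the bookkeeping forced by the hypothesis holding only for balls $B\subset\Omega$: every radius used must keep the relevant balls inside $\Omega$, which is why the conclusion is local and why $x,y$ are restricted to a compact subset where the distance to $\partial\Omega$ is bounded below. The one genuinely geometric ingredient is the lower bound on $|B_r(x)\cap B_r(y)|$ for $r=2|x-y|$, since it is what allows the two neighbouring averages to be compared; the remainder is a telescoping argument together with the convergence of a geometric series.
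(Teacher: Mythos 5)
Your argument is correct and coincides with the classical Campanato--Meyers proof: comparison of averages over concentric dyadic balls, telescoping to produce the continuous representative, identification with $u$ via Lebesgue differentiation, and comparison of averages over overlapping balls of radius $2|x-y|$. The paper does not prove this theorem but cites \cite{PKJF13}, where essentially this same argument is given, so there is nothing to add beyond noting that your bookkeeping of which balls must lie in $\Omega$ (and the resulting restriction to compact subsets) is exactly the right point of care.
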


\section{Higher H\"older regularity}\label{results_section}

In this section, we establish our main result. As noted earlier, for clarity, we first provide a detailed proof for the model equation \eqref{mainequation}, postponing the treatment of the general case \eqref{general equation} to \Cref{generalization}.

We start with the definition of a weak solution.

\begin{definition}\label{definition of solutions}
Let $1<p<n$, $n-1<\lambda<n$ and $f\in L^{1,\lambda}(\Omega)$. A function $u\in W^{1,p}(\Omega)$ is a weak solution to \eqref{mainequation} if
\begin{equation}\label{weaksolution}
\int_\Omega|Du|^{p-2}Du\cdot D\varphi\,dx=\int_\Omega f\varphi\,dx,
\quad \forall \varphi \in W^{1,p}_0(\Omega).
\end{equation}
\end{definition}

We first show that the right-hand side of \eqref{weaksolution} is finite. Let $B$ be a ball with radius $r>0$, contained in $\Omega$. For $\varphi\in C_0^\infty(B)$, the finiteness of the integral follows from the Fefferman--Phong inequality. Indeed, using H\"older's inequality, \eqref{Morrey_norm} and \eqref{constantestimate}, we obtain
\begin{eqnarray*}
\int_{B} |f| |\varphi|\,dx 
& 
\leq 
& 
\left( \int_{B} |f|\,dx \right)^{1/p'} \left( \int_{B} |f||\varphi|^p\,dx \right)^{1/p}\nonumber\\
& \leq & 
c\, \|f\|_{L^{1,\lambda}(B)}^{\frac{1}{p'}}r^{\frac{\lambda}{p'}}
\|f\|_{L^{1,\lambda}(B)}^{\frac{1}{p}}r^{\frac{\lambda-n+p}{p}}  \left(\int_B|D\varphi|^p\,dx \right)^{1/p}\nonumber\\
&\leq &c\,r^{\lambda+1-\frac{n}{p}}\|f\|_{L^{1,\lambda}(\Omega)}\|\varphi\|_{W^{1,p}(\Omega)}<\infty.
\end{eqnarray*}
Since $\lambda+1-\frac{n}{p}>0$ and $r\le\diam(\Omega)$, we get
\begin{equation}\label{important inequality 1}
    \int_{B} |f| |\varphi|\,dx\le c\,
    \|f\|_{L^{1,\lambda} (\Omega)}\|\varphi\|_{W^{1,p}(\Omega)}, \quad \forall\varphi\in C_0^\infty(B),
\end{equation}
where $c>0$ depends only on $p$, $n$, $\lambda$ and $\diam(\Omega)$. Next, we aim to extend \eqref{important inequality 1} from $B$ to $\Omega$. Indeed, if $\varphi\in C_0^\infty(\Omega)$, then we can cover the $\supp\varphi$ with finitely many balls $\{B_i\}_{i=1}^m\subset\Omega$, and construct a partition of unity $\{\eta_i\}_{i=1}^m\in C_0^\infty(B_i)$, with $\eta_i\in[0,1]$, such that, in $\supp\varphi$, one has
$$
\sum_{i=1}^m\eta_i=1.
$$
Observe that if $\varphi_i:=\varphi\eta_i\in C_0^\infty(B_i)$, $1\le i\le m$, then
\begin{equation}\label{important inequality 2}
    \|\varphi_i\|_{W^{1,p}(\Omega)}\le C_i\|\varphi\|_{W^{1,p}(\Omega)},
\end{equation}
where the constant $C_i>0$ depends only on $\eta_i$. Furthermore, in $\supp\varphi$, we have
$$
\varphi=\varphi\sum_{i=1}^m\eta_i=\sum_{i=1}^m\varphi_i.
$$
Combining the latter with \eqref{important inequality 1} and \eqref{important inequality 2}, we get
\begin{equation*}
    \begin{split}
        \int_\Omega|f||\varphi|\,dx&\le\sum_{i=1}^m\int_{B_i}|f||\varphi_i|\,dx\\
        &\le c\,
    \|f\|_{L^{1,\lambda} (\Omega)}\sum_{i=1}^m\|\varphi_i\|_{W^{1,p}(\Omega)}\\
    &\le C\,
    \|f\|_{L^{1,\lambda} (\Omega)}\|\varphi\|_{W^{1,p}(\Omega)},
    \end{split}
\end{equation*}
where $C>0$ is a constant depending only on $c$ and $\displaystyle\sum_{i=1}^mC_i$. Thus, 
\begin{equation}\label{compactsupport}
\int_{\Omega} |f| |\varphi|\,dx 
\le
c\,
\|f\|_{L^{1,\lambda}(\Omega)}\|\varphi\|_{W^{1,p}(\Omega)},\quad \forall\,\varphi\in C_0^\infty(\Omega).
\end{equation}
Observe now that \eqref{compactsupport} remains true for test functions in $W^{1,p}_0(\Omega)$. Indeed, if $\varphi\in W^{1,p}_0(\Omega)$ and $\varphi_i\in C_0^\infty(\Omega)$ is such that $\varphi_i\to\varphi$ in $W^{1,p}(\Omega)$, as $i\to\infty$, then, up to a subsequence (still denoted by $\varphi_i$), one has $\varphi_i\to\varphi$, a.e. in $\Omega$. By \eqref{compactsupport},
\begin{equation}\label{appoximationestimate}
    \int_\Omega |f| |\varphi_i|\,dx
    \le
    c\,
    \|f\|_{L^{1,\lambda}(\Omega)}\|\varphi_i\|_{W^{1,p}(\Omega)}.
\end{equation}
Since $\|\varphi_i\|_{W^{1,p}(\Omega)}\to\|\varphi\|_{W^{1,p}(\Omega)}$,
employing Fatou's lemma, \eqref{appoximationestimate} yields
\begin{equation*}
    \int_\Omega |f||\varphi|\,dx\le
    \liminf_{i\to\infty} \int_\Omega|f||\varphi_i|\,dx\le
    c\,
    \|f\|_{L^{1,\lambda}(\Omega)}\|\varphi\|_{W^{1,p}(\Omega)}<\infty.
\end{equation*}

\begin{remark}
We stress the low integrability assumption on $f$, which is merely an $L^1-$function. In fact, our results still hold if $f$ is replaced by a measure. A measure $\mu$ in $\Omega$ is said to belong to the Morrey space of measures
$ML^{1,\lambda}(\R^n)$ if there exists a constant $c$ such that
$$
\left| \mu(B_r(x)) \right| \leq c\, r^\lambda,
$$
for any $x \in \R^n$ and $r>0$.
\end{remark}
To proceed, we first revisit the following lemma from \cite{LU68} (see also \cite{DiB83, E82, T84}), which will be used in proving our main result. 

\begin{lemma}\label{regularity of v}
Let $1<p<\infty$ and $u\in W^{1,p}(\Omega)$. There exists $\gamma\in (0,1)$ such that the problem
\begin{equation*}
	\begin{cases}
		-\Div(|Dv|^{p-2}Dv)=0 \quad \textrm{in} \ \Omega\\
		v-u\in W^{1,p}_0(\Omega)
	\end{cases}
\end{equation*}
has a unique solution $v\in C^{1,\gamma}_{\loc}(\Omega)$.
\end{lemma}

We are now ready to prove our main result.

\begin{theorem}\label{main result} 
Let $n-1<\lambda<n$, $f\in L^{1,\lambda}(\Omega)$, and $\gamma\in(0,1)$ be as in \Cref{regularity of v}.  If $u$ is a weak solution of \eqref{mainequation}, then $u\in C_{\loc}^{1,\alpha}(\Omega)$, where
\begin{equation}\label{definition of alpha}        
\alpha:=
\begin{cases}
\min\left(\gamma,\lambda + 1 -\frac{2n}{p}\right),\quad \frac{2n}{\lambda+1} < p \leq 2;\\
\\
\min\left(\gamma,\dfrac{\lambda + 1-n}{p-1}\right),\quad 2 \leq p<n.
\end{cases}
    \end{equation}   
\end{theorem}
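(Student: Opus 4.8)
The plan is to run a $p$-harmonic comparison (perturbation) argument. Fix a ball $B_R\subset\Omega$ and let $v$ be the unique $p$-harmonic replacement of $u$ in $B_R$, i.e. the solution provided by \Cref{regularity of v} with $v-u\in W^{1,p}_0(B_R)$; by that lemma $v\in C^{1,\gamma}_{\loc}$. The goal is to show that the excess
$$
\Phi(\rho):=\inf_{q\in\R^n}\int_{B_\rho}|Du-q|^p\dd x
$$
decays like $\rho^{n+p\alpha}$. Since $\Phi$ is nondecreasing and comparable to $\int_{B_\rho}|Du-(Du)_\rho|^p\dd x$, \Cref{campanato} applied to $Du$ will then yield $Du\in C^{0,\alpha}_{\loc}(\Omega)$, that is, $u\in C^{1,\alpha}_{\loc}(\Omega)$.

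The central step is a comparison estimate. Subtracting the weak formulation \eqref{weaksolution} for $v$ from that for $u$ and testing with $u-v\in W^{1,p}_0(B_R)$ gives
$$
\int_{B_R}\big(|Du|^{p-2}Du-|Dv|^{p-2}Dv\big)\cdot(Du-Dv)\dd x=\int_{B_R}f\,(u-v)\dd x.
$$
The right-hand side is controlled exactly as in \eqref{important inequality 1}: by H\"older's inequality, \eqref{Morrey_norm}, and \eqref{constantestimate} with $\varphi=u-v$,
$$
\int_{B_R}|f||u-v|\dd x\le c\,R^{\lambda+1-\frac np}\|f\|_{L^{1,\lambda}(\Omega)}\Big(\int_{B_R}|Du-Dv|^p\dd x\Big)^{1/p}.
$$
For the left-hand side I would invoke the standard monotonicity inequalities for the $p$-Laplacian: for $p\ge2$ one has $(|a|^{p-2}a-|b|^{p-2}b)\cdot(a-b)\ge c|a-b|^p$, while for $1<p\le2$ one has $(|a|^{p-2}a-|b|^{p-2}b)\cdot(a-b)\ge c|a-b|^2(|a|+|b|)^{p-2}$. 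When $p\ge2$ these combine directly into $\int_{B_R}|Du-Dv|^p\dd x\le cR^{(\lambda+1-n/p)p'}=cR^{n+p\beta}$ with $\beta=\frac{\lambda+1-n}{p-1}$. When $1<p\le2$ an extra H\"older step with exponents $2/p$ and $2/(2-p)$, together with the a priori bound $\int_{B_R}(|Du|+|Dv|)^p\dd x\le c\|Du\|_{L^p(\Omega)}^p$ (note $\int_{B_R}|Dv|^p\le\int_{B_R}|Du|^p$ by minimality), yields $\int_{B_R}|Du-Dv|^p\dd x\le cR^{p(\lambda+1)-n}=cR^{n+p\beta}$ with $\beta=\lambda+1-\frac{2n}{p}$; here the hypothesis $p>\frac{2n}{\lambda+1}$ is precisely what makes $\beta>0$. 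In both regimes this $\beta$ is the second entry of the minimum defining $\alpha$ in \eqref{definition of alpha}.

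The final step couples the comparison estimate with the regularity of $v$. Since $v\in C^{1,\gamma}_{\loc}$, its gradient obeys the classical excess-decay bound $\int_{B_\rho}|Dv-(Dv)_{B_\rho}|^p\dd x\le c(\rho/R)^{n+p\gamma}\int_{B_R}|Dv-(Dv)_{B_R}|^p\dd x$ for $\rho\le R$. Bounding $\Phi(\rho)\le\int_{B_\rho}|Du-(Dv)_{B_\rho}|^p\dd x$, splitting off $Dv$, enlarging the domain of the $|Du-Dv|^p$ term to $B_R$, and estimating $\int_{B_R}|Dv-(Dv)_{B_R}|^p\dd x$ by $c\big(\int_{B_R}|Du-Dv|^p\dd x+\Phi(R)\big)$, I obtain
$$
\Phi(\rho)\le c\Big(\frac\rho R\Big)^{n+p\gamma}\Phi(R)+c\,R^{n+p\beta},\qquad 0<\rho\le R.
$$
A standard iteration lemma for nonnegative nondecreasing functions then upgrades this into $\Phi(\rho)\le c\,\rho^{n+p\alpha}$ with $\alpha=\min(\gamma,\beta)$, and \Cref{campanato} concludes the argument.

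The principal difficulty I anticipate lies in the comparison estimate in the singular range $1<p\le2$: the degenerate monotonicity inequality only controls a weighted $L^2$ norm of $Du-Dv$, so recovering the plain $L^p$ excess with the sharp power of $R$ requires the interpolation step above and careful bookkeeping of the exponents, which is exactly what produces the two distinct formulas for $\alpha$. A secondary, more technical point is extracting the endpoint exponent $\alpha=\min(\gamma,\beta)$ from the iteration when $\gamma$ and $\beta$ are comparable; the borderline $\gamma=\beta$ is the delicate case and must be handled through the quantitative form of the iteration lemma.
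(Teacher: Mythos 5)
Your proposal is correct in substance and rests on exactly the same engine as the paper: the $p$-harmonic replacement $v$, the identity obtained by testing both equations with $u-v$, the two monotonicity inequalities for the $p$-Laplacian in the ranges $p\ge 2$ and $p\le 2$, and the Fefferman--Phong/Morrey estimate $\int_B |f||u-v|\,dx\le c\,r^{\lambda+1-\frac{n}{p}}\|f\|_{L^{1,\lambda}}\left(\int_B|Du-Dv|^p\,dx\right)^{1/p}$, yielding the comparison bound $\int_{B_R}|Du-Dv|^p\,dx\le cR^{n+p\beta}$ with precisely the paper's exponents $\beta=\frac{\lambda+1-n}{p-1}$ (degenerate case) and $\beta=\lambda+1-\frac{2n}{p}$ (singular case, where your extra H\"older step with exponents $2/p$ and $2/(2-p)$ matches \eqref{3.14}). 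Where you diverge is the final packaging. The paper works at a single scale: it splits $\fint_B|Du-(Du)_r|^p$ into $I_1+I_2+I_3$, bounds $I_2\le Cr^{\gamma p}$ directly from $v\in C^{1,\gamma}_{\loc}$, absorbs $I_3$ into $I_1$, and applies \Cref{campanato} at once. You instead introduce the excess $\Phi(\rho)$, prove the two-radius inequality $\Phi(\rho)\le c(\rho/R)^{n+p\gamma}\Phi(R)+cR^{n+p\beta}$, and invoke the standard iteration lemma. Your route is longer but scale-invariant by construction; the paper's shortcut silently requires the Campanato constant of $Dv$ to be uniform over all balls, which is exactly the information your excess-decay formulation keeps explicit.

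One point deserves a caveat, which you partly anticipate. The iteration lemma delivers $\Phi(\rho)\le c\rho^{n+p\min(\gamma,\beta)}$ cleanly only when $n+p\gamma>n+p\beta$, i.e.\ when $\gamma>\beta$ and the minimum is $\beta$. In the regime $\gamma\le\beta$ the homogeneous term carries the \emph{smaller} exponent, and the naive iteration only produces $\Phi(\rho)\le c_{\epsilon}\rho^{n+p(\gamma-\epsilon)}$ for every $\epsilon>0$ (the constant $A>1$ in front of $(\rho/R)^{n+p\gamma}$ degrades the exponent), so you would conclude $u\in C^{1,\alpha'}_{\loc}$ for all $\alpha'<\gamma$ rather than $\alpha=\gamma$ exactly. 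This is not fatal --- $\gamma$ is itself only ``some'' exponent from \Cref{regularity of v}, so the statement survives after an innocuous relabeling, and the paper's own single-scale argument has the mirror-image of this issue --- but if you want the literal exponent $\min(\gamma,\beta)$ you should either quote the excess-decay estimate for $p$-harmonic functions with constant $A\le 1$ for small ratios $\rho/R$, or simply adopt the paper's device of bounding $I_2$ by $Cr^{\gamma p}$ on each ball and feeding the sum $C(r^{\gamma p}+r^{\beta p})\le Cr^{\alpha p}$ directly into \Cref{campanato}.
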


\begin{proof}
Aiming to use \Cref{campanato}, we take any ball $B\subset\Omega$ with radius $r>0$, such that $B^\varepsilon\subset\Omega$, where $B^\varepsilon$ is the concentric ball with radius $r+\varepsilon$. Taking the solution $v$ of
\begin{equation}\label{Dirichlet}
	\begin{cases}
		-\Div(|Dv|^{p-2}Dv)=0 \quad \textrm{in} \ B^\varepsilon\\
		v-u\in W^{1,p}_0(B^\varepsilon),
	\end{cases}
\end{equation}    
we estimate   
\begin{equation}\label{splitting integral}
        \begin{split}
            \fint_{B}|Du-(Du)_r|^p\,dx
            &
            \le
            C
            \left(\fint_{B}|Du-Dv|^p\,dx
            +
            \fint_{B}|Dv-(Dv)_r|^p\,dx
            \right.
            \\
            &
            \quad \left.
            +\fint_{B}|(Dv)_r-(Du)_r|^p\,dx
            \right)
            \\            
            &
            =:C(I_1+I_2+I_3),
        \end{split}
\end{equation}
for a constant $C>0$, depending only on $p$. To estimate $I_2$, observe that by \Cref{regularity of v}, $v\in C^{1,\gamma}_{\loc}(B^\varepsilon)$, and the Campanato characterization of H\"older spaces yields
\begin{equation}\label{I2 esimate}
        I_2=\fint_{B}|Dv-(Dv)_r|^p\,dx\le Cr^{\gamma p}.
\end{equation}
Furthermore, using H\"older's inequality, we bound
\begin{equation}\label{I3 estimate}
        I_3=\fint_{B}|(Dv)_r-(Du)_r|^p\,dx\le \fint_{B}|Du-Dv|^p\,dx= I_1.
\end{equation}
To estimate $I_1$, we take $u-v\in W^{1,p}_0(B^\varepsilon)$ as a test function both in \eqref{mainequation} and in \eqref{Dirichlet}, to obtain
\begin{equation*}
        \int_{B^\varepsilon}|Du|^{p-2}Du\cdot D(u-v)\,dx=\int_{B^\varepsilon}f(u-v)\,dx
\end{equation*}
and
$$
    \int_{B^\varepsilon}|Dv|^{p-2}Dv\cdot D(u-v)\,dx=0.
$$
Therefore,
\begin{equation}\label{important iquality}
        \int_{B^\varepsilon}\left(|Du|^{p-2}Du-|Dv|^{p-2}Dv\right)\cdot D(u-v)\,dx=\int_{B^\varepsilon}f(u-v)\,dx.
\end{equation}

\medskip

We now discriminate between the degenerate and singular cases.

\bigskip

\noindent \textsc{Case 1.} For $2\le p<n$, we use the well-known inequality,
\begin{equation*}\label{well-known inequality}
        \left(|\xi|^{p-2}\xi-|\eta|^{p-2}\eta\right)\cdot(\xi-\eta)\ge C |\xi-\eta|^p, \quad \forall \, \xi, \eta \in \R^n,
\end{equation*}
for a constant $C>0$, depending only on $p$. Combined with \eqref{important iquality}, it gives
\begin{align*}
    \int_{B}|Du-Dv|^p\,dx&\le\int_{B^\varepsilon}|Du-Dv|^p\,dx\\
    &\le\frac{1}{C}\int_{B^\varepsilon}\left(|Du|^{p-2}Du-|Dv|^{p-2}Dv\right)\cdot D(u-v)\,dx\\
    &=\frac{1}{C}\int_{B^\varepsilon}f(u-v)\,dx.
\end{align*} 
Letting $\varepsilon\to0$, we get
$$
    \int_{B}|Du-Dv|^p\,dx\le\frac{1}{C}\int_{B}f(u-v)\,dx.
$$    
      
Using now the H\"older and Fefferman--Phong inequalities, we estimate
\begin{eqnarray}\label{applying FPh}
    \int_{B}|f||u-v|\,dx 
    & 
    \le
    & 
    \left(\int_{B} |f|\,dx \right)^{1/p'} \left( \int_{B} |f||u-v|^p\,dx \right)^{1/p}\nonumber\\
    & \le & 
    c\, \|f\|_{L^{1,\lambda}(\Omega)}^{\frac{1}{p'}}r^{\frac{\lambda}{p'}}
    \|f\|_{L^{1,\lambda}(\Omega)}^{\frac{1}{p}}r^{\frac{\lambda+p-n}{p}}\left(\int_{B} |Du-Dv|^p\,dx \right)^{1/p}\nonumber\\
    &=&c\,r^{\lambda+1-\frac{n}{p}}\|f\|_{L^{1,\lambda}(\Omega)}\left(\int_{B}|Du-Dv|^p\,dx\right)^{\frac{1}{p}}.
\end{eqnarray}
Hence, we obtain    
\begin{equation*}
    \left(\int_{B}|Du-Dv|^p\,dx\right)^{\frac{1}{p'}}
    \le C
    r^{\lambda+1-\frac{n}{p}}\|f\|_{L^{1,\lambda}(\Omega)},
\end{equation*}
and thus
\begin{eqnarray}
    I_1 & = & \fint_{B}|Du-Dv|^p\,dx \nonumber\\
    & \le & C r^{(\lambda+1-\frac{n}{p})p '-n}\|f\|^{p'}_{L^{1,\lambda}(\Omega)} \nonumber\\
    & = & C r^{\frac{(\lambda +1 -n)p}{p-1}} \|f\|_{L^{1,\lambda}(\Omega)}^{\frac{p}{p-1}} \label{final-estimate}.
\end{eqnarray}   

Combining \eqref{splitting integral}, \eqref{I2 esimate}, \eqref{I3 estimate} and \eqref{final-estimate}, we reach
\begin{eqnarray*}
        \fint_{B}|Du-(Du)_r|^p\,dx & \le & C(I_1+I_2)\\
        & \le & C\left(r^{\frac{(\lambda + 1-n)p}{p-1}}+r^{\gamma p}\right)\\
        & \leq & C r^{\alpha p},
\end{eqnarray*}
where $C>0$ depends only on $p$, $n$ and $\|f\|_{L^{1,\lambda}(\Omega)}$, and $\alpha$ is defined by \eqref{definition of alpha}. Recalling \Cref{campanato}, we conclude that $u\in C^{1,\alpha}_{\loc}(\Omega)$.

\bigskip

\noindent \textsc{Case 2.} For $\frac{2n}{1+\lambda}<p<2$, using H\"older's inequality with $2/p$ and $2/(2-p)$, we estimate
\begin{align}\label{3.14}
        &\int_B|Du-Dv|^p\,dx\le\int_{B^\varepsilon}|Du-Dv|^p\,dx\nonumber\\
        &=\int_{B^\varepsilon}\left[\left(|Du|+|Dv|\right)^{p-2}|Du-Dv|^2\right]^\frac{p}{2}\left(|Du|+|Dv|\right)^{(2-p)\frac{p}{2}}\,dx\\ &\le\left[\int_{B^\varepsilon}\left(|Du|+|Dv|\right)^{p-2}|Du-Dv|^2\,dx\right]^\frac{p}{2}\left[\int_{B^\varepsilon}\left(|Du|+|Dv|\right)^p\,dx\right]^{\frac{2-p}{2}}.\nonumber
\end{align}
To bound the first term on the right-hand side of \eqref{3.14}, we use the well-known inequality
$$
    \left(|\xi|^{p-2}\xi-|\eta|^{p-2}\eta\right)\cdot(\xi-\eta)\ge c_p\frac{|\xi-\eta|^2}{\left(|\xi|+|\eta|\right)^{2-p}},\quad \forall \xi,\eta\in\R^n,
$$
where $c_p>0$ is a constant depending only on $p$. Additionally, taking $u-v$ as a test function for \eqref{Dirichlet} and using H\"older's inequality, we get
$$
    \int_{B^\varepsilon}|Dv|^p\,dx \leq \int_{B^\varepsilon}|Du|^p\,dx.
$$
Thus, letting $\varepsilon\to0$, \eqref{3.14} leads to
\begin{equation}\label{final estimate}
    \int_B|Du-Dv|^p\,dx\le C\left[\int_Bf(u-v)\,dx\right]^{\frac{p}{2}},
\end{equation}
where $C>0$ depends only on $p$ and $\|Du\|_{L^p(B)}$. Combining \eqref{final estimate} with \eqref{applying FPh}, we arrive at
$$
    \fint_B|Du-Dv|^p\,dx\le C\,r^{(\lambda+1-\frac{n}{p})p-n}\|f\|^{p}_{L^{1,\lambda}(\Omega)}.
$$
Note that since $p>\frac{2n}{1+\lambda}$, then $(\lambda+1-\frac{n}{p})p-n>0$. As before,
\begin{eqnarray*}
        \fint_{B}|Du-(Du)_r|^p\,dx & \le & C(I_1+I_2)\\
        & \le & C\left(r^{(\lambda+1-\frac{n}{p})p-n}+r^{\gamma p}\right)\\
        & \leq & C r^{\alpha p}.
\end{eqnarray*}    
\end{proof}

\begin{remark}
    Observe that, for $p\geq 2$, we always have 
    $$\dfrac{\lambda + 1-n}{p-1} \in (0,1),$$
    while, for $\frac{2n}{\lambda+1}<p \leq 2$, we have
    $$
    \lambda + 1 -\frac{2n}{p}\in(0,1).
    $$
\end{remark}

\begin{remark}\label{remark on optimality}
The Serrin's functions (cf. \cite{Se})
$$
u(x)=|x|^\gamma,\quad 0 < \gamma \leq 1,
$$
which are not more regular than Lipschitz continuous, solve \eqref{mainequation} in $B_1$, with
$$
f=\gamma^{p-1}\left[(\gamma-1)(p-1)-1+n \right]|x|^{(\gamma-1)(p-1)-1}.
$$
Since $f\in L^{1,\lambda} (B_1)$, for $\lambda = (\gamma-1)(p-1)-1+n \leq n-1$, we conclude that our assumption $\lambda > n-1$ cannot be relaxed.
\end{remark}

\section{A class of quasilinear equations}\label{generalization}

As commented earlier, the sharp $C^{1,\alpha}_{\loc}$ regularity result of the previous section holds for a more general class of quasilinear operators. In this section, we state the general regularity result and sketch its proof. Let
$$
A(x,\mu,\xi):\Omega\times\R\times\R^n\rightarrow\R^n
$$
be differentiable and satisfy the structural assumptions
\begin{equation}\label{structural conditions}
    \begin{cases}
        \displaystyle\sum_{i,j=1}^n\frac{\partial A_j}{\partial\xi_i}(x,\mu,\xi)\cdot\zeta_i\zeta_j\ge C_1(\kappa+|\xi|)^{p-2}|\zeta|^2\\
        \displaystyle\sum_{i,j=1}^n\left|\frac{\partial A_j}{\partial\xi_i}(x,\mu,\xi)\right|\le C_2(\kappa+|\xi|)^{p-2}\\
        \displaystyle\sum_{i,j=1}^n\left|\frac{\partial A_j}{\partial x_i}(x,\mu,\xi)\right|+\left|\frac{\partial A_j}{\partial\mu}(x,\mu,\xi)\right|\le C_3(\kappa+|\xi|)^{p-2}|\xi|,
    \end{cases}
\end{equation}
for any $\zeta\in\R^n$ and for some $\kappa\in[0,1]$, for positive constants $C_1$, $C_2$, $C_3$. The right-hand side in \eqref{general equation} is assumed to satisfy 
\begin{equation}\label{assumptions on B}
    |B(x,\mu,\xi)|\le h(x)|\xi|^{p-1}+g(x)|\mu|^{p-1}+f(x),
\end{equation}
where 
\begin{equation}\label{rhs is in Morrey}
    h\in L^{p,\lambda}(\Omega),\quad g,f\in L^{1,\lambda}(\Omega),\quad n-1<\lambda<n.
\end{equation}
Solutions to \eqref{general equation} are understood in the weak sense according to the following definition.

\begin{definition}
    A function $u\in W^{1,p}(\Omega)$ is called a weak solution to \eqref{general equation} if
    $$
    \int_\Omega A(x,u,Du)\cdot D\varphi\,dx=\int_\Omega B(x,u,Du)\varphi\,dx,
    \quad \forall\varphi\in W^{1,p}_0(\Omega).
    $$
\end{definition}

As before, the definition is justified by the Fefferman--Phong inequality. 
Furthermore, we recall an analog of \Cref{regularity of v} (see \cite{LU68} and \cite{DiB83,T84}, for example).

\begin{lemma}\label{general lemma}
If $p>1$ and $u\in W^{1,p}(\Omega)$, then the problem
\begin{equation}\label{homogeneous}
	\begin{cases}
		-\Div A(x,u,Du)=0 \quad \mbox{in} \ \Omega\\
		v-u\in W^{1,p}_0(\Omega)
	\end{cases}
\end{equation}
has a unique solution $v\in C_{\loc}^{1,\gamma}(\Omega)$, for a certain $\gamma\in(0,1)$.
\end{lemma}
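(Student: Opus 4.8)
The plan is to split the statement into its two independent assertions---well-posedness of the Dirichlet problem \eqref{homogeneous} and interior $C^{1,\gamma}$ regularity of its solution---and to treat the first by monotone operator theory and import the second from the classical literature, exactly mirroring the role of \Cref{regularity of v} in the model case. First I would reduce to homogeneous boundary data by writing $w:=v-u$ and seeking $w\in W^{1,p}_0(\Omega)$ with $\int_\Omega A(x,w+u,D(w+u))\cdot D\varphi\,dx=0$ for every $\varphi\in W^{1,p}_0(\Omega)$; this defines an operator $\mathcal{A}\colon W^{1,p}_0(\Omega)\to W^{-1,p'}(\Omega)$ to which I would apply the Leray--Lions / Browder--Minty existence theorem. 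Integrating the first two lines of \eqref{structural conditions} along the segment $t\mapsto t\xi$ produces the $(p-1)$-growth bound $|A(x,\mu,\xi)|\le c\,(\kappa+|\xi|)^{p-1}$ and the coercivity estimate $A(x,\mu,\xi)\cdot\xi\ge c\,|\xi|^p$ up to lower-order terms, which give the boundedness and coercivity of $\mathcal{A}$ after absorbing those terms via Young's and Poincar\'e's inequalities; the same first line yields strict monotonicity of $\xi\mapsto A(x,\mu,\xi)$ for each fixed $\mu$.

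The principal difficulty in the existence step is that $A$ depends on the solution value through the middle slot, so $\mathcal{A}$ is not globally monotone. I would resolve this by pseudomonotonicity: the dependence on $\mu$ enters as a lower-order perturbation that is continuous along sequences converging weakly in $W^{1,p}_0(\Omega)$ and strongly in $L^p(\Omega)$, the latter furnished by the compact Rellich--Kondrachov embedding, with the bound on $\partial A/\partial\mu$ in the third line of \eqref{structural conditions} quantifying the required continuity. Pseudomonotonicity combined with coercivity then gives surjectivity and hence a solution $v$ of \eqref{homogeneous}. For uniqueness I would test the equation satisfied by the difference of two solutions against that difference, split $A(x,v_1,Dv_1)-A(x,v_2,Dv_2)$ into a term monotone in the gradient (nonnegative) and a term measuring the $\mu$-discrepancy controlled by the third structural bound, and absorb the latter using that both solutions are bounded---invoking, where a self-contained argument would need smallness, the comparison principle for this class as established in \cite{LU68}.

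Finally, the interior estimate $v\in C^{1,\gamma}_{\loc}(\Omega)$ for some $\gamma\in(0,1)$ is the substantive analytic content, and I would not reprove it: under the structural conditions \eqref{structural conditions} it is precisely the conclusion of the Ladyzhenskaya--Uraltseva theory and its refinements, so I would cite \cite{LU68, DiB83, T84} directly, just as \Cref{regularity of v} does in the homogeneous $p$-Laplacian case. Consequently the only genuinely new bookkeeping here is the existence/uniqueness argument, whereas the deep gradient H\"older bound is imported. The true main obstacle, were one to insist on a self-contained treatment, would be exactly that last estimate, whose proof rests on De Giorgi--Nash--Moser iteration adapted to the degenerate and singular regimes and is well beyond the scope of this recollection.
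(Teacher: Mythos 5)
The paper does not actually prove this lemma: it is explicitly presented as a recalled result, with the entire content---existence, uniqueness, and the interior $C^{1,\gamma}$ estimate---deferred wholesale to \cite{LU68} and \cite{DiB83,T84}, exactly as with \Cref{regularity of v} in the model case. Your proposal is therefore more ambitious than the source: you import only the gradient H\"older estimate and reconstruct the well-posedness yourself via Leray--Lions/pseudomonotone operator theory. That reconstruction is sound in outline---growth, coercivity and strict gradient-monotonicity of $A$ do follow by integrating \eqref{structural conditions} along segments, and the $\mu$-dependence is naturally handled by Rellich compactness---but the two places where your sketch is thinnest are precisely where the general theory is delicate. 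First, the third line of \eqref{structural conditions} only yields $|A(x,\mu,\xi)|\le C(1+|\mu|)(\kappa+|\xi|)^{p-1}$, so the standard Leray--Lions growth hypothesis (which requires $|\mu|^{p-1}+|\xi|^{p-1}$ separately) is not literally satisfied and one must truncate or work with bounded solutions. Second, uniqueness for operators depending on the $\mu$-slot is \emph{not} a routine absorption argument: the discrepancy term $\int |v_1-v_2|(\kappa+|Dv_2|)^{p-1}|D(v_1-v_2)|\,dx$ cannot be absorbed into $\int |D(v_1-v_2)|^{p}\,dx$ without a smallness or monotonicity-in-$\mu$ hypothesis, which is why you ultimately fall back on the comparison principle of \cite{LU68}---the same citation the paper uses for the whole statement. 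In short, your route buys a partially self-contained well-posedness argument at the cost of these two technical caveats, while the paper simply cites; in both treatments the substantive $C^{1,\gamma}_{\loc}$ estimate is imported, not proved.
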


As in \Cref{results_section}, \Cref{general lemma} leads to the following regularity result.

\begin{theorem}\label{general result} 
Let \eqref{structural conditions}-\eqref{rhs is in Morrey} hold, and $\gamma\in(0,1)$ be as in \Cref{general lemma}. If $u$ is a weak solution of \eqref{general equation}, then $u\in C_{\loc}^{1,\alpha}(\Omega)$, where $\alpha$ is defined by \eqref{definition of alpha}. 
\end{theorem}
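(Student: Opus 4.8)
The plan is to mirror the strategy already carried out for the model equation \eqref{mainequation} in \Cref{main result}, replacing the $p$-Laplacian comparison with the general operator $A$ and absorbing the extra lower-order terms in $B$ using the structural hypotheses \eqref{structural conditions}--\eqref{rhs is in Morrey}. The backbone remains \Cref{campanato}: fix a ball $B\subset\Omega$ of radius $r$ with a slightly enlarged concentric ball $B^\e\subset\Omega$, let $v$ solve the homogeneous Dirichlet problem \eqref{homogeneous} on $B^\e$ with boundary datum $u$, and split
\begin{equation*}
\fint_B|Du-(Du)_r|^p\,dx \le C(I_1+I_2+I_3),
\end{equation*}
with $I_1=\fint_B|Du-Dv|^p$, $I_2=\fint_B|Dv-(Dv)_r|^p$, $I_3=\fint_B|(Dv)_r-(Du)_r|^p$. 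By \Cref{general lemma}, $v\in C^{1,\gamma}_{\loc}(B^\e)$, so $I_2\le Cr^{\gamma p}$, and Jensen/H\"older give $I_3\le I_1$ exactly as in the model case. Thus everything reduces to estimating the comparison term $I_1$.

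First I would produce the analog of \eqref{important iquality}: testing \eqref{general equation} and \eqref{homogeneous} with $u-v\in W^{1,p}_0(B^\e)$ and subtracting yields
\begin{equation*}
\int_{B^\e}\bigl(A(x,u,Du)-A(x,u,Dv)\bigr)\cdot D(u-v)\,dx = \int_{B^\e} B(x,u,Du)(u-v)\,dx + R,
\end{equation*}
where $R=\int_{B^\e}\bigl(A(x,u,Dv)-A(x,v,Dv)\bigr)\cdot D(u-v)\,dx$ accounts for the $x$- and $\mu$-dependence of $A$. The left-hand side is controlled from below by the monotonicity/ellipticity in the first line of \eqref{structural conditions}; by the mean value theorem along the segment joining $Dv$ to $Du$, it dominates (up to $c_p$) either $\int_{B^\e}|Du-Dv|^p$ when $p\ge2$ or $\int_{B^\e}(\kappa+|Du|+|Dv|)^{p-2}|Du-Dv|^2$ when $1<p<2$, reproducing the two inequalities used in Cases 1 and 2. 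For the right-hand side, I would split $B$ via \eqref{assumptions on B} into three pieces. The $f$-piece $\int f(u-v)$ is estimated verbatim as in \eqref{applying FPh}, giving the factor $r^{\lambda+1-n/p}\|f\|_{L^{1,\lambda}}$ through H\"older and the Fefferman--Phong \Cref{FP inequality}. The $g|u|^{p-1}$-piece is handled the same way: $u\in L^\infty_{\loc}$ by the known $C^{0,\alpha}$ theory (cited in the introduction), so $g|u|^{p-1}$ is dominated by a multiple of $g\in L^{1,\lambda}$, reducing it to the $f$-case. The $h|Du|^{p-1}$-piece is the genuinely new term: since $h\in L^{p,\lambda}$, I would apply H\"older with exponents $(p,p/(p-1))$ to write $\int_B h|Du|^{p-1}|u-v|\le \bigl(\int_B |h|^p|u-v|^p\bigr)^{1/p}\bigl(\int_B|Du|^p\bigr)^{(p-1)/p}$, then apply Fefferman--Phong to $|h|^p\in\Tilde S_p$ (via \Cref{inclusions} applied to $|h|^p\in L^{1,\lambda'}$ with $\lambda'$ still $>n-p$) to gain the same $r$-power as the $f$-term, with $\|Du\|_{L^p(B)}$ absorbed into the constant. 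The remainder $R$ is bounded using the third line of \eqref{structural conditions}, producing $(\kappa+|Dv|)^{p-2}|Dv|\,|u-v|$-type integrands that, after H\"older and Fefferman--Phong against the local boundedness of $|Dv|$, contribute a strictly higher power of $r$ and are therefore harmless.

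Collecting these, the right-hand side is bounded by $c\,r^{\lambda+1-n/p}\bigl(\int_B|Du-Dv|^p\bigr)^{1/p}$ in the degenerate range $p\ge2$, and by the mixed quantity $c\,r^{\lambda+1-n/p}\bigl(\int_B(\kappa+|Du|+|Dv|)^{p-2}|Du-Dv|^2\bigr)^{1/2}$ in the singular range, exactly as in \eqref{applying FPh} and \eqref{final estimate}. Combining with the ellipticity lower bound and rearranging yields precisely the decay $I_1\le C r^{(\lambda+1-n)p/(p-1)}$ when $p\ge2$ and $I_1\le C r^{(\lambda+1-n/p)p-n}$ when $\frac{2n}{\lambda+1}<p<2$, the two exponents producing the $\alpha$ of \eqref{definition of alpha}. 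Feeding this back into the $I_1+I_2+I_3$ split and invoking \Cref{campanato} gives $u\in C^{1,\alpha}_{\loc}(\Omega)$.

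I expect the main obstacle to be the lower-order term $h|Du|^{p-1}$ arising from $h\in L^{p,\lambda}$ rather than $L^{1,\lambda}$: one must verify that $|h|^p$ lands in a Stummel--Kato class with the correct decay $\eta(2r)\lesssim r^{\lambda-n+p}$ so that \Cref{FP inequality} yields the \emph{same} $r$-power as the principal $f$-term and does not degrade $\alpha$. A secondary technical point is the remainder $R$: one should confirm via the structural bound and the local boundedness of $Du$, $Dv$ (itself a consequence of the $C^{1,\gamma}_{\loc}$/De Giorgi theory) that the $x$- and $\mu$-dependence of $A$ contributes a power of $r$ strictly larger than the governing one, so that it is absorbed and the final exponent is dictated solely by the $f$-term, exactly reproducing \eqref{definition of alpha}.
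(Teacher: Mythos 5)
Your overall strategy coincides with the paper's: the published argument for \Cref{general result} is only a sketch that reduces matters to the two monotonicity inequalities for $A(x,\mu,\cdot)$ (quoting \cite{T84}) and then defers everything else to the proof of \Cref{main result}, so your explicit treatment of the comparison function, the splitting into $I_1+I_2+I_3$, and the Fefferman--Phong step is exactly in the intended spirit; your identification of the remainder $R$ coming from the $x$- and $\mu$-dependence of $A$ is a genuine issue that the sketch passes over.

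However, your concrete recipe for the term $\int_B h|Du|^{p-1}|u-v|\,dx$ does not close. After H\"older with exponents $(p,p')$ and Fefferman--Phong applied to $|h|^p\in L^{1,\lambda}$ you obtain the factor $r^{(\lambda-n+p)/p}\bigl(\int_B|D(u-v)|^p\,dx\bigr)^{1/p}$ multiplied by $\bigl(\int_B|Du|^p\,dx\bigr)^{1/p'}$. The $f$-term carries the power $r^{\lambda+1-\frac{n}{p}}=r^{\lambda/p'}\cdot r^{(\lambda-n+p)/p}$, where the extra $r^{\lambda/p'}$ comes from $\bigl(\int_B|f|\,dx\bigr)^{1/p'}\le\|f\|_{L^{1,\lambda}(\Omega)}^{1/p'}r^{\lambda/p'}$. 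In your estimate the corresponding factor is $\bigl(\int_B|Du|^p\,dx\bigr)^{1/p'}$, and absorbing it into the constant, as you propose, loses exactly the power $r^{\lambda/p'}$; the resulting exponent for $I_1$ is then strictly smaller than $\frac{(\lambda+1-n)p}{p-1}$ (resp. $(\lambda+1-\frac{n}{p})p-n$), and the claimed $\alpha$ degrades. To recover the same power one needs a gradient Morrey estimate of the type $\int_{B_r}|Du|^p\,dx\le C\,r^{\lambda}$, which is not available a priori and is established nowhere in your argument (assuming it at that strength would essentially presuppose the conclusion). A similar, though more tractable, bookkeeping issue affects $R$: the structural bound yields a term quadratic in $\|D(u-v)\|_{L^p}$ that must be absorbed via Young's inequality, not merely discarded as carrying a ``higher power of $r$''. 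You correctly flag the $h$-term as the main obstacle, but the verification you defer is precisely the step that is missing.
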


\begin{proof}[Sketch of the proof.] For $2\le p<n$, 
we have (cf. \cite[Lemma 1]{T84})  
$$
\left(A(x,\mu,\xi)-A(x,\mu,\eta)\right)\cdot(\xi-\eta)\ge C |\xi-\eta|^p,
$$
for a constant $C>0$, depending on $p$, $n$ and the $C_i$ in \eqref{structural conditions}. Combined with \eqref{assumptions on B}, as in the proof of \Cref{main result}, this leads to \eqref{final-estimate}, where $v\in C_{\loc}^{1,\gamma}(B^\varepsilon)$ is the unique solution of \eqref{homogeneous} in $B^\varepsilon$. The rest of the proof follows analogously.

If $\frac{2n}{\lambda+1}<p<2$, we use
$$
    \left(A(x,\mu,\xi)-A(x,\mu,\eta)\right)\cdot(\xi-\eta)\ge c\frac{|\xi-\eta|^2}{\left(\kappa+|\xi|+|\eta|\right)^{2-p}},\quad \forall \xi,\eta\in\R^n,
    $$
where $c>0$ is a constant depending only on $p$ and the structural constants. The rest of the proof proceeds identically.
\end{proof}

\bigskip

{\small \noindent{\bf Acknowledgments.} 
The authors thank the anonymous referee for the insightful comments and constructive suggestions that substantially improved the manuscript. This publication is based upon work supported by King Abdullah University of Science and Technology (KAUST) under Award No. ORFS-CRG12-2024-6430. GDF is part of Gruppo Nazionale per l'Analisi Matematica, la Probabilit\`a e le loro Applicazioni (GNAMPA) - Istituto Nazionale di Alta Matematica (INdAM) and partially supported by the University of Catania, Piano della Ricerca 2016/2018 Linea di intervento 2. JMU is partially supported by UID/00324 - Centre for Mathematics of the University of Coimbra. GDF thanks KAUST for the nice hospitality and excellent research environment during his visits.}

\end{document}